\documentclass[11pt]{amsart}
\usepackage{graphicx}
\usepackage[T1]{fontenc}
\usepackage{setspace}
\usepackage[hidelinks]{hyperref}
\usepackage{mathrsfs}
\usepackage{geometry}
\geometry{left=20mm, top=20mm, total={170mm,220mm}}
\vfuzz2pt 
\hfuzz2pt 
\newtheorem{thm}{Theorem}[section]
\newtheorem{cor}[thm]{Corollary}

\theoremstyle{definition}
\newtheorem{defn}[thm]{Definition}
\theoremstyle{remark}

\theoremstyle{definition}


\setcounter{page}{1}

\begin{document}
\title[Lamperti's recurrence of stochastic sequences]{Some notes on Lamperti's recurrence of stochastic sequences}%
\author{Vyacheslav M. Abramov}%
\address{24 Sagan Drive, Cranbourne North, Victoria 3977, Australia}%

\email{vabramov126@gmail.com}%

\thanks{ORCID: 0000-0002-9859-100X}%
\subjclass{60F15, 60J05; 60J10; 60J80}%
\keywords{Limit theorems in strong sense; recurrence of discrete-time stochastic processes; recurrence of Markov chains; birth-and-death processes}

\begin{abstract}
The present study provides another look on Lamperti's theorem on recurrence or transience of stochastic sequences. We establish a connection between Lamperti's theorem and the recent result by the author in [V. M. Abramov, \emph{A new criterion for recurrence of Markov chains with infinitely countable set of states,} Theor. Probab. Math. Stat. \textbf{112} (2025), 1--15].
\end{abstract}

\maketitle

\section{Introduction}

The study of recurrence or transience of Markov processes has a long history going back to the fifties of the last century. Particular Markov chains were studied by Gillis \cite{G}, Harris \cite{H}, Hodges and Rosenblatt \cite{HR}, Karlin and McGregor \cite{KM, KM1}. The well-known book by Chung \cite{K} published in 1960 also discussed Markov chains with stationary transition probabilities, where the particular chain of the birth-and-death type was studied in \cite[Part 1, \S{12}]{K}.

In 1960 Lamperti \cite{L} proved one of the most important result that establishes the conditions for recurrence or transience for the class of discrete time stochastic processes that generalizes Markov chains. His conditions of recurrence or transience required the integrability assumption for stochastic processes (see \eqref{2} below), and was based on the asymptotic relationship between the first and second conditional moments of the process.

Recently Abramov \cite{A} obtained alternative conditions for recurrence or transience of Markov chains with infinitely countable set of states. The necessary and sufficient condition obtained there was closely related to that one given for birth-and-death type Markov chains in \cite[Part 1, \S{12}]{K}. There was no requirement on the existence of the second moments, but it was required that a Markov chain formed a connected domain. The definition of connected domain is given in \cite{A}.

The aim of this study is another look at the results of Lamperti in order to build the bridge between these two studies.

In the existing literature, we did not find alternative studies on recurrence or transience of Markov chains. For more clarity, we refer the known relatively recent books on the theory of Markov chains \cite{N} and the stability or ergodicity of continuous time Markov processes \cite{MT}.

Let $\{X_n,\mathscr{F}_n\}$, $\mathscr{F}_n=\sigma(X_n, X_{n-1},\ldots,X_0)$, be a nonnegative stochastic sequence satisfying the property
\begin{equation}\label{1}
\mathsf{P}\{\limsup_{n\to\infty}X_n=\infty\}=1,
\end{equation}
(Further, the notation for stochastic sequence is provided without indication of the filtration.)
The stochastic sequence $X_n$ is said to be recurrent, if there exists $r<\infty$ such that
$
\mathsf{P}\{\liminf_{n\to\infty}X_n\leq r\}=1,
$
and transient if
$
\mathsf{P}\{\lim_{n\to\infty}X_n=\infty\}=1,
$
e. g., \cite[pp. 316--317]{L}.

Note that the definition of recurrence given here for stochastic sequences is not traditional, since it does not imply that all states are visited
infinitely often. But it is Harris recurrence, where the subset $[0, r]$ is assumed to be visited infinitely often \cite{MT}. The study that conducted in \cite{A} used the definition of recurrence in the traditional sense. This however makes no difference for the result of our study; the different definitions for recurrence or transience finally lead to equivalent conditions for recurrence or transience, that in particular is confirmed by the present study as well.

Assume that
\begin{equation}\label{2}
\mathsf{E}\{|X_{n+1}-X_n|^{2+\epsilon}~|~\mathscr{F}_n\}\leq B<\infty \ \text{for some positive}\ \epsilon,
\end{equation}
and denote
\begin{eqnarray*}
\overline{\mu}(x)&=&\mathrm{ess} \sup\mathsf{E}\{X_{n+1}-X_n~|~X_n=x, X_{n-1}, X_{n-2},\ldots, X_0\},\\
\underline{\mu}(x)&=&\mathrm{ess} \inf\mathsf{E}\{X_{n+1}-X_n~|~X_n=x, X_{n-1}, X_{n-2},\ldots, X_0\},\\
\overline{v}(x)&=&\mathrm{ess} \sup\mathsf{E}\{(X_{n+1}-X_n)^2~|~X_n=x, X_{n-1}, X_{n-2},\ldots, X_0\},\\
\underline{v}(x)&=&\mathrm{ess} \inf\mathsf{E}\{(X_{n+1}-X_n)^2~|~X_n=x, X_{n-1}, X_{n-2},\ldots, X_0\}.
\end{eqnarray*}
(To avoid unnecessary complications, here and later we indicate the only dependence of $x$ and do not indicate the dependence of the history of the process. This is justified by the nature of asymptotic results, where the convergence in $x$ is assumed to be uniform and independent of the history.)
\smallskip

The following theorem is due to Lamperti \cite{L}.

\begin{thm}\label{thm_Lamperti} (Lamperti \cite{L}.)
Let the nonnegative stochastic sequence $X_n$ satisfy \eqref{1} and \eqref{2}, and, as $x\to\infty$,
\[
\overline{\mu}(x)\leq\frac{\underline{v}(x)}{2x}+O\left(\frac{1}{x^{1+\epsilon}}\right).
\]
Then the stochastic sequence is recurrent. If instead for some $\theta>1$ and almost all large $x$,
\[
\underline{\mu}(x)\geq\frac{\theta\overline{v}(x)}{2x},
\]
then the stochastic sequence is transient.
\end{thm}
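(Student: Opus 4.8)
The plan is to use the classical Lyapunov-function (Foster-type) approach, which is the standard route to Lamperti-type recurrence criteria. The key idea is to transform the sequence $X_n$ by a slowly varying function $f$ and study the resulting drift. For the recurrent case, I would seek a function $f$, increasing and concave for large arguments (a natural candidate being $f(x)=\log x$ or a suitable power $f(x)=x^{\alpha}$ with small $\alpha$), such that $f(X_n)$ becomes a supermartingale (up to a bounded region), i.e. $\mathsf{E}\{f(X_{n+1})-f(X_n)\mid\mathcal{F}_n\}\leq 0$ whenever $X_n=x$ is large. For the transient case, I would instead look for a decreasing function $g$ (a candidate being $g(x)=x^{-\beta}$ for suitable $\beta>0$) for which $g(X_n)$ is a bounded supermartingale on the region $\{X_n>r\}$; convergence of bounded supermartingales then forces $g(X_n)$ to converge, and a separating-level argument shows the only consistent limit is $g(\infty)=0$, i.e. $X_n\to\infty$.

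The central computation in both cases is a second-order Taylor expansion of $f(X_{n+1})$ around $X_n=x$. Writing $\Delta_n=X_{n+1}-X_n$, I would expand
\[
f(x+\Delta_n)=f(x)+f'(x)\Delta_n+\tfrac12 f''(x)\Delta_n^2+R_n,
\]
take conditional expectations, and obtain
\[
\mathsf{E}\{f(X_{n+1})-f(X_n)\mid\mathcal{F}_n\}\approx f'(x)\,\mu(x)+\tfrac12 f''(x)\,v(x)+\mathsf{E}\{R_n\mid\mathcal{F}_n\}.
\]
For $f(x)=\log x$ one has $f'(x)=1/x$ and $f''(x)=-1/x^2$, so the leading terms combine into $\frac{1}{x}\big(\mu(x)-\frac{v(x)}{2x}\big)$, and the recurrence hypothesis $\overline{\mu}(x)\leq\frac{\underline{v}(x)}{2x}+O(x^{-1-\epsilon})$ makes this nonpositive for large $x$ once the remainder is controlled. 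For transience with $g(x)=x^{-\beta}$ one gets $g'(x)=-\beta x^{-\beta-1}$ and $g''(x)=\beta(\beta+1)x^{-\beta-2}$, and the hypothesis $\underline{\mu}(x)\geq\frac{\theta\overline{v}(x)}{2x}$ with $\theta>1$ lets me choose $\beta\in(0,\theta-1)$ so that the drift of $g(X_n)$ is nonpositive for large $x$, making it a supermartingale.

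The main obstacle is the control of the remainder term $R_n$, and this is precisely where hypothesis \eqref{2} enters. Since $f''$ is not bounded and $\Delta_n$ has only $(2+\epsilon)$ moments, I cannot use a crude Taylor bound; instead I would split the expectation over $\{|\Delta_n|\leq x/2\}$ and its complement. On the former, a third-order-type estimate gives a remainder of order $\mathsf{E}\{|\Delta_n|^{2+\epsilon}\}\cdot x^{-2-\epsilon}=O(x^{-2-\epsilon})$ by \eqref{2}, which is lower order than the $x^{-2}$ leading term. On the complement $\{|\Delta_n|>x/2\}$, Markov's inequality applied to the $(2+\epsilon)$-moment shows this event has probability $O(x^{-2-\epsilon})$, and combined with the boundedness of $f$ near $0$ and its logarithmic growth, its contribution is again negligible relative to the leading order. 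Getting these error estimates to land strictly below the $O(x^{-1-\epsilon})$ allowance in the recurrence hypothesis (and strictly negative in the transience case) is the delicate part; once the supermartingale property is established on $\{X_n>r\}$, the recurrence conclusion follows from the fact that a nonnegative supermartingale cannot drift to $+\infty$ while $\limsup X_n=\infty$ by \eqref{1}, and the transience conclusion follows from the convergence theorem for bounded supermartingales together with a level-crossing argument.
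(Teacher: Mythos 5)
The paper itself offers no proof of Theorem \ref{thm_Lamperti}: it is quoted from Lamperti \cite{L} as a known result, so there is no internal argument to set yours against. Your Lyapunov-function/supermartingale sketch is in fact the route Lamperti himself takes (his proofs rest on Doob's convergence theorems for semimartingales), so the overall strategy is the right one.

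However, the recurrence half as you have written it has a genuine gap, and it is not the one you flag. With $f(x)=\log x$ the leading drift is $\frac{1}{x}\bigl(\mu(x)-\frac{v(x)}{2x}\bigr)$, and the hypothesis $\overline{\mu}(x)\leq\frac{\underline{v}(x)}{2x}+O\left(\frac{1}{x^{1+\epsilon}}\right)$ only bounds this by a quantity of order $x^{-2-\epsilon}$ that may be \emph{positive}. So $\log X_n$ need not be a supermartingale for large $X_n$ no matter how sharply you estimate the Taylor remainder $R_n$: the $O(x^{-1-\epsilon})$ allowance built into the hypothesis must itself be absorbed, not merely the expansion error. The standard repair (and essentially Lamperti's) is to perturb the test function, e.g.\ $f(x)=\log x - Ax^{-\delta}$ with $0<\delta<\epsilon$ and $A$ large, whose second-order term contributes an additional \emph{negative} drift of order $x^{-\delta-2}v(x)$ that dominates the $x^{-2-\epsilon}$ excess; for this to win one needs $v(x)$ bounded away from zero, a nondegeneracy point your sketch never invokes. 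Two smaller repairs are needed on the transience side: $g(x)=x^{-\beta}$ is unbounded at $0$, so one should take $(1+x)^{-\beta}$ or stop the process below a level $r$; and the tail event $\{|\Delta_n|>x/2\}$ contributes $O(x^{-2-\epsilon})$ to the drift of $g$, which is negligible against the main negative term of order $x^{-\beta-2}$ only if $\beta<\epsilon$, so the correct constraint is $\beta<\min(\epsilon,\theta-1)$ rather than just $\beta<\theta-1$.
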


Unfortunately, this quite general Theorem \ref{thm_Lamperti} addresses the question about recurrence or transience for a small class of stochastic sequences, in which $\underline{\mu}(x)\neq\overline{\mu}(x)$ or/and $\underline{v}(x)\neq\overline{v}(x)$ for all large $x$. We tested the two examples considered in \cite[Sect. 4]{A}, and for both of them, despite of their clarity, Theorem \ref{thm_Lamperti} did not give any answer.
 Therefore, we will only discuss its corollary given below,  the proof of which given in \cite{L} was based on Doob's convergence theorem for semimartingales \cite[Chapt. 7]{D}.

\begin{cor}\label{cor_Lamperti} (Lamperti \cite{L}.)
Assume that a nonegative stochastic sequence $X_n$ satisfies \eqref{1}, and assume that for $x\to\infty$, we almost surely have
\begin{eqnarray}
\mu(x)&=&\mathsf{E}\{X_{n+1}-X_n~|~X_n=x, X_{n-1},\ldots,X_0\}=\frac{\xi}{2x}+O\left(\frac{1}{x^{1+\epsilon}}\right), \label{4.2}\\
v(x)&=&\mathsf{E}\{(X_{n+1}-X_n)^2~|~X_n=x,  X_{n-1},\ldots,X_0\}=r^2+O\left(\frac{1}{x^{\epsilon}}\right),\nonumber\\
&&\text{where}\ \xi \ \text{is a positive value}.\nonumber
\end{eqnarray}
Then the stochastic sequence is recurrent, if for almost all $x\to\infty$,
\[
\mu(x)\leq \frac{v(x)}{2x}+O\left(\frac{1}{x^{1+\epsilon}}\right).
\]
If instead for some $\theta>1$ and  almost all large $x$,
\[
\mu(x)\geq\frac{\theta v(x)}{2x},
\]
then the stochastic sequence is transient.
\end{cor}

The rest of this study is organized as follows. In Section \ref{S2}, we prove a new theorem that is similar to Corollary \ref{cor_Lamperti}.
In Section  \ref{S3}, we discuss our findings in Section \ref{S2} and show their connection with the main result in \cite{A}. The appendix contains a supplementary material related to the given study.

 \section{Main result and its proof}\label{S2}

The conditions of recurrence and transience for stochastic sequences given by Corollary \ref{cor_Lamperti}  are expressed explicitly via the limits with probability 1 of $x\mu(x)$ and $v(x)$  as $x\to\infty$, and, hence, the statement of the corollary is defined for the classes of stochastic sequences where these limits exist and takes the given values.

The new result proved in this section suggests the representation that looks slightly more complicated than that in Corollary \ref{cor_Lamperti}. As well, our conditions are more specified compared to the assumptions of Corollary \ref{cor_Lamperti}. On the other hand, for the class of processes considered here we avoid the use of the almost sure limit of $v(x)$ as $x\to\infty$.%

The advantage of our result is that it helps to understand clearer the connection with the recent result obtained in \cite{A} as well as a new phenomena in the behaviour of the considered stochastic sequences or Markov chains.

\subsection*{Assumptions}
We assume that for almost all large $x$
\begin{eqnarray}
\mathsf{E}\{X_{n+1}-X_n~\big|~X_n=x, X_{n+1}<X_n, X_{n-1},\ldots, X_0\}&=&-A^*+\frac{\xi^*}{x}+O\left(\frac{1}{x^{1+\epsilon}}\right),\label{4.3}\\
\mathsf{E}\{X_{n+1}-X_n~\big|~X_n=x, X_{n+1}>X_n, X_{n-1},\ldots, X_0\}&=&A^{**}+\frac{\xi^{**}}{x}+O\left(\frac{1}{x^{1+\epsilon}}\right),\label{4.4}
\end{eqnarray}
and
\begin{eqnarray}
\mathsf{P}\{X_{n+1}<X_n~|~X_n=x, X_{n-1},\ldots, X_0\}&=&q+O\left(\frac{1}{x^{1+\epsilon}}\right),\label{4.5}\\
\mathsf{P}\{X_{n+1}>X_n~|~X_n=x, X_{n-1},\ldots, X_0\}&=&p+O\left(\frac{1}{x^{1+\epsilon}}\right), \quad q+p\leq1\label{4.6}
\end{eqnarray}
for positive $A^*, A^{**}$ satisfying $qA^*=pA^{**}$ and arbitrary parameters $\xi^*, \xi^{**}$ satisfying $q\xi^*+p\xi^{**}>0$.

Without loss of generality one can assume that $p+q=1$ since the processes with this property have the same classification as those with $0<p+q<1$.
(It is easy to see that it is consistent with Corollary \ref{cor_Lamperti}, since the assumption
\[
\lim_{x\to\infty}\mathsf{P}\{X_{n+1}=X_n~|~X_n=x, X_{n-1},\ldots, X_0\})>0
\]
leads to the same classification of recurrence or transience as that in the case
\[
\lim_{x\to\infty}\mathsf{P}\{X_{n+1}=X_n~|~X_n=x, X_{n-1},\ldots, X_0\})=0
\]
due to the definition of $\mu(x)$ and $v(x)$.)

Our goal is to prove the following theorem.

\begin{thm}\label{thm1}
Assume that \eqref{1}, integrability condition \eqref{2} and assumptions \eqref{4.3}, \eqref{4.4}, \eqref{4.5} and \eqref{4.6} are satisfied.
Then the stochastic sequence is recurrent if, as $x\to\infty$, we almost surely have
\[
\begin{aligned}
&\frac{p\mathsf{E}\{X_{n+1}-X_n~|~X_n=x, X_{n+1}>X_n, X_{n-1},\ldots, X_0\}}{q\mathsf{E}\{X_{n}-X_{n+1}~|~X_n=x, X_{n+1}<X_n, X_{n-1},\ldots, X_0\}}\\
&\leq1+\frac{R}{x}+O\left(\frac{1}{x^{1+\epsilon}}\right),
\end{aligned}
\]
where $R(x)=\mathsf{E}\{|X_{n+1}-X_n|~\big|~X_{n}=x, X_{n-1},\ldots, X_0\}$, and $R$ is the limit with probability $1$ of $R(x)$ as $x\to\infty$.
If instead
\[
\begin{aligned}
&\frac{p\mathsf{E}\{X_{n+1}-X_n~|~X_n=x, X_{n+1}>X_n, X_{n-1},\ldots, X_0\}}{q\mathsf{E}\{X_{n}-X_{n+1}~|~X_n=x, X_{n+1}<X_n, X_{n-1},\ldots, X_0\}}\\
&\geq1+\frac{R\theta}{x},
\end{aligned}
\]
for some $\theta>1$ and almost all large $x$, then the stochastic sequence is transient.
\end{thm}

\begin{proof}

The class of stochastic sequences $X_n$ satisfying the conditions
\begin{eqnarray*}
\mathsf{E}\{X_{n+1}-X_n~|~X_n=x, X_{n-1},\ldots, X_0\}&=&\frac{\xi}{2x}+O\left(\frac{1}{x^{1+\epsilon}}\right),\\
\mathsf{E}\{(X_{n+1}-X_n)^2~|~X_n=x, X_{n-1},\ldots, X_0\}&=&r^2+O\left(\frac{1}{x^{\epsilon}}\right)
\end{eqnarray*}
for almost all large $x$,  will be denoted by $\frak{A}_{\xi,r^2}$. According to Corollary \ref{cor_Lamperti}, the class of stochastic sequences $X_n$ must have the same classification as the class $X_n^\prime=r^{-1}X_n$, for which
\begin{eqnarray*}
\mathsf{E}\{X_{n+1}^\prime-X_n^\prime~|~X_n^\prime=x, X_{n-1}^\prime,\ldots, X_0^\prime\}&=&\frac{\eta}{2x}+O\left(\frac{1}{x^{1+\epsilon}}\right), \quad \eta=\frac{\xi}{r^2},\\
\mathsf{E}\{(X_{n+1}^\prime-X_n^\prime)^2~|~X_n^\prime=x, X_{n-1}^\prime,\ldots, X_0^\prime\}&=&1+O\left(\frac{1}{x^{\epsilon}}\right).
\end{eqnarray*}
Indeed, for the stochastic sequence $X_n^\prime$ for almost all large $x$ we have
\begin{equation*}
\begin{aligned}
&\mathsf{E}\{X_{n+1}^\prime-X_n^\prime~|~X_n^\prime=x, X_{n-1}^\prime,\ldots, X_0^\prime\}\\
&\quad=\frac{1}{r}\mathsf{E}\{X_{n+1}-X_n~|~X_n=rx, X_{n-1},\ldots, X_0\}\\
&\quad=\frac{\xi}{2r^2x}+O\left(\frac{1}{x^{1+\epsilon}}\right)=\frac{\eta}{2x}+O\left(\frac{1}{x^{1+\epsilon}}\right),
\end{aligned}
\end{equation*}

\begin{equation*}
\begin{aligned}
&\mathsf{E}\{(X_{n+1}^\prime-X_n^\prime)^2~|~X_n^\prime=x, X_{n-1}^\prime,\ldots, X_0^\prime\}\\
&\quad=\mathsf{E}\{(r^{-1}X_{n+1}-r^{-1}X_n)^2~|~r^{-1}X_n=x, X_{n-1},\ldots, X_0\}\\
&\quad=\frac{r^2}{r^2}+O\left(\frac{1}{x^{\epsilon}}\right)=1+O\left(\frac{1}{x^{\epsilon}}\right).
\end{aligned}
\end{equation*}
Hence
\[
x~\frac{\mathsf{E}\{X_{n+1}-X_n~|~X_n=x, X_{n-1},\ldots, X_0\}}{\mathsf{E}\{(X_{n+1}-X_n)^2~|~X_n=x, X_{n-1},\ldots, X_0\}}
\]
and
\[
x~\frac{\mathsf{E}\{X_{n+1}^\prime-X_n^\prime~|~X_n^\prime=x, X_{n-1}^\prime,\ldots, X_0^\prime\}}{\mathsf{E}\{(X_{n+1}^\prime-X_n^\prime)^2~|~X_n^\prime=x, X_{n-1}^\prime,\ldots, X_0^\prime\}}
\]
have the same limit as $x\to\infty$.

Therefore, it is reasonable to first study the stochastic sequences $X_n$ belonging to the class $\frak{A}_{\xi,1}$, that is, the class of normalized stochastic sequences, with the following extension of the results to other required classes of stochastic sequences.

Let $\frak{A}_{\xi,1}$ be the class of normalized stochastic sequences, and let $X_n$ belong to this class. Let us first find the asymptotic behavior of the ratio
\[
\frac{p\mathsf{E}\{X_{n+1}-X_n~|~X_n=x, X_{n+1}>X_n, X_{n-1},\ldots, X_0\}}{q\mathsf{E}\{X_{n}-X_{n+1}~|~X_n=x, X_{n+1}<X_n, X_{n-1},\ldots, X_0\}}
\]

With \eqref{4.3} and \eqref{4.4}, the assumptions that as $x\to\infty$,
\begin{equation}\label{4.8}
\begin{aligned}
\mathsf{E}\{X_{n+1}-X_n~|~X_n=x, X_{n-1},\ldots, X_0\}=\frac{\xi}{2x}+O\left(\frac{1}{x^{1+\epsilon}}\right),
\end{aligned}
\end{equation}
\begin{equation}\label{4.9}
\begin{aligned}
\mathsf{E}\{(X_{n+1}-X_n)^2~\big|~X_{n}=x, X_{n-1},\ldots, X_0\}
=1+O\left(\frac{1}{x^{\epsilon}}\right)
\end{aligned}
\end{equation}
for almost all large $x$, generally implies:
\begin{eqnarray}
\left(\frac{q\xi^*+p\xi^{**}}{x}\right)&=&\frac{\xi}{2x},\nonumber\\
A^*q&=&A^{**}p,\label{4.7}
\end{eqnarray}
and
\[
\begin{aligned}
\mathsf{E}\{|X_{n+1}-X_n|~\big|~X_n=x, X_{n-1},\ldots, X_0\}&=qA^*\left(1-\frac{q\xi^*}{x}\right)+pA^{**}\left(1+\frac{p\xi^{**}}{x}\right)+O\left(\frac{1}{x^{1+\epsilon}}\right)\\
&=pA^{**}\left(2+\frac{p\xi^{**}}{x}-\frac{q\xi^*}{x}\right)+O\left(\frac{1}{x^{1+\epsilon}}\right)\\
&=2pA^{**}+O\left(\frac{1}{x}\right),
\end{aligned}
\]
due to \eqref{4.7}.

Hence, on the one hand we have \eqref{4.8} and \eqref{4.9}, and on the other hand
\[
\begin{aligned}
&\frac{p\mathsf{E}\{X_{n+1}-X_n~|~X_n=x, X_{n+1}> X_n, X_{n-1},\ldots, X_0\}}{q\mathsf{E}\{X_{n}-X_{n+1}~|~X_n=x, X_{n+1}< X_n, X_{n-1},\ldots, X_0\}}\\
&\quad =\frac{pA^{**}+\frac{p\xi^{**}}{x}+O\left(\frac{1}{x^{1+\epsilon}}\right)}{pA^{**}-\frac{q\xi^*}{x}+O\left(\frac{1}{x^{1+\epsilon}}\right)}\\
&\quad = 1+\frac{q\xi^*+p\xi^{**}}{pA^{**}x}+O\left(\frac{1}{x^{1+\epsilon}}\right)\\
&\quad = 1+\frac{\xi}{2pA^{**}x}+O\left(\frac{1}{x^{1+\epsilon}}\right)
\end{aligned}
\]
for almost all large $x$.

It follows from Corollary \ref{cor_Lamperti} that the stochastic sequence $X_n$ is recurrent, if $\xi\leq1+O\big(x^{-1-\epsilon}\big)$. If instead $\xi\geq\theta>1$, then the stochastic sequence $X_n$ is transient. That is, if we define $\eta:=\xi/(2pA^{**})$, then, according to the above, the condition for recurrence of $X_n$ is $\eta\leq1/(2pA^{**})+O\big(x^{-1-\epsilon}\big)$, and the condition for transience is $\eta\geq\theta/(2pA^{**})$, $\theta>1$.

Now, let $s$ be such the constant that for almost all large $x$,
\[
\mathsf{E}\{|sX_{n+1}-sX_n|~\big|~sX_{n}=x, X_{n-1},\ldots, X_0\}=1.
\]
That is, due to \eqref{4.2}, $s=1/(2pA^{**})$.

In other words, the new stochastic sequence $X_n^\prime=sX_n$ belongs to the class $\frak{A}_{s^2\xi, s^2}$. Then, it is readily seen from the similar derivations provided before that
\[
\frac{p\mathsf{E}\{X_{n+1}^\prime-X_n^\prime~|~X_n^\prime=x, X_{n+1}^\prime> X_n^\prime, X_{n-1}^\prime,\ldots, X_0^\prime\}}{q\mathsf{E}\{X_{n}^\prime-X_{n+1}^\prime~|~X_n^\prime=x, X_{n+1}^\prime< X_n^\prime. X_{n-1}^\prime,\ldots, X_0^\prime\}}=1+\frac{\xi}{x}+O\left(\frac{1}{x^{1+\epsilon}}\right),
\]
for almost all large $x$

It follows from Corollary \ref{cor_Lamperti} that the stochastic sequence $X_n^\prime$ is recurrent, if $\xi\leq1+O\big(x^{-1-\epsilon}\big)$. If instead $\xi\geq\theta>1$, then the stochastic sequence $X_n^\prime$ is transient. Furthermore, recurrence (resp. transience) of $X_n^\prime$ implies recurrence (resp. transience) of stochastic sequence $X_n$, and vice versa.

We consider now a stochastic sequence $Y_n=RX_{n}^\prime$ belonging to the class $\frak{A}_{s^2R^2\xi, s^2R^2}$. Apparently that for almost all large $x$
\[
\begin{aligned}
&\mathsf{E}\{|Y_{n+1}-Y_n|~\big|~Y_{n}=x, Y_{n-1},\ldots, Y_0\}\\
&\quad=\mathsf{E}\{|RX_{n+1}^\prime-RX_n^\prime|~\big|~RX_{n}^\prime=x, X_{n-1}^\prime,\ldots, X_0^\prime\}\\
&\quad=R\mathsf{E}\{|X_{n+1}^\prime-X_n^\prime|~\big|~X_{n}^\prime=R^{-1}x, X_{n-1}^\prime,\ldots, X_0^\prime\}\\
&\quad=R+O\left(\frac{1}{x}\right).
\end{aligned}
\]

Together with this, for almost all large $x$ we have:
\[
\begin{aligned}
&\frac{p\mathsf{E}\{Y_{n+1}-Y_n~|~Y_n=x, Y_{n+1}>Y_n, Y_{n-1},\ldots, Y_0\}}{q\mathsf{E}\{Y_{n}-Y_{n+1}~|~Y_n=x, Y_{n+1}<Y_n, Y_{n-1},\ldots, Y_0\}}\\
&=\frac{p\mathsf{E}\{RX_{n+1}^\prime-RX_n^\prime~|~RX_n^\prime=x, X_{n+1}^\prime>X_n^\prime, X_{n-1}^\prime,\ldots, X_0^\prime\}}{q\mathsf{E}\{RX_{n}^\prime-RX_{n+1}^\prime~|~RX_n^\prime=x, X_{n+1}^\prime<X_n^\prime, X_{n-1}^\prime,\ldots, X_0^\prime\}}\\
&=\frac{p\mathsf{E}\{X_{n+1}^\prime-X_n^\prime~|~X_n^\prime=R^{-1}x, X_{n+1}^\prime>X_n^\prime, X_{n-1}^\prime,\ldots, X_0^\prime\}}{q\mathsf{E}\{X_{n}^\prime-X_{n+1}^\prime~|~X_n^\prime=R^{-1}x, X_{n+1}^\prime<X_n^\prime, X_{n-1}^\prime,\ldots, X_0^\prime\}}\\
&=1+\frac{R\xi}{x}+O\left(\frac{1}{x^{1+\epsilon}}\right).
\end{aligned}
\]
These last derivations finish the proof.
\end{proof}

\section{Discussion}\label{S3}
In this section, we discuss the connection of our result with the aforementioned result in \cite{A} given for the convenience of the readers in Appendix A.

In the proof of Theorem \ref{thm1}, we introduced the class $\frak{A}_{\xi, r^2}$ of processes. %
As it was mentioned in the proof of Theorem \ref{thm1}, there is one-to-one correspondence between elements of the class $\frak{A}_{r^2\xi, r^2}$ and elements of the class $\frak{A}_{\xi, 1}$. Furthermore, if a Markov chain belonging to $\frak{A}_{r^2\xi, r^2}$ is recurrent (resp. transient), then the same is true for the corresponding Markov chain belonging to $\frak{A}_{\xi, 1}$, and vice versa. Hence, one can restrict our attention by considering Markov chains belonging to the selected class $\frak{A}_{s^2\xi, s^2}$ that satisfies the property
\begin{equation}\label{d0}
\mathsf{E}\{|X_{n+1}-X_n|~\big|~X_{n}=x\}=1+O\left(\frac{1}{x}\right)
\end{equation}
for large $x$.

According to Theorem \ref{thm1}, the class of Markov chains $X_n$ belonging to $\frak{A}_{s^2\xi, s^2}$ is recurrent, if as $x\to\infty$,
\begin{equation}\label{d1}
\frac{p\mathsf{E}\{X_{n+1}-X_n~|~X_n=x, X_{n+1}>X_n\}}{q\mathsf{E}\{X_{n}-X_{n+1}~|~X_n=x, X_{n+1}<X_n\}}\leq1+\frac{1}{x}+O\left(\frac{1}{x^{1+\epsilon}}\right),
\end{equation}
where $p$ and $q$ are defined above under the assumptions of Theorem \ref{thm1}.
If instead
\begin{equation}\label{d2}
\frac{p\mathsf{E}\{X_{n+1}-X_n~|~X_n=x, X_{n+1}>X_n\}}{q\mathsf{E}\{X_{n}-X_{n+1}~|~X_{n}=x, X_{n+1}<X_n\}}\geq1+\frac{\theta}{x}
\end{equation}
for some $\theta>1$ and all large $x$, then the Markov chain is transient.

Under special assumptions made in \cite{A} (see Appendix A), it was proved that the Markov chain is recurrent if and only if
\[
\sum_{n=1}^\infty\prod_{i=1}^{n}\frac{e_i^-}{e_i^+}=\infty,
\]
where following the notation of this paper,
\begin{eqnarray*}
e_n^-&=&q\mathsf{E}\{X_{n}-X_{n+1}~|~X_{n+1}<X_n\},\\
e_n^+&=&p\mathsf{E}\{X_{n+1}-X_{n}~|~ X_{n+1}>X_n\}.
\end{eqnarray*}
That is, under the aforementioned additional assumptions (Appendix A), the necessary and sufficient condition for recurrence of the Markov chain is the divergence of the series
\begin{equation}\label{6}
\sum_{i=1}^\infty\prod_{n=1}^i\frac{q\mathsf{E}\{X_{n}-X_{n+1}~|~X_{n+1}<X_n\}}{p\mathsf{E}\{X_{n+1}-X_{n}~|~ X_{n+1}>X_n\}}.
\end{equation}

Under condition \eqref{d0}, the sufficient condition for divergence of \eqref{6} is \eqref{d1} and for convergence is \eqref{d2}. The condition for convergence \eqref{d2} follows from Raabe's ratio test \cite[p. 39]{BM} or particularly from Frink's ratio test \cite{F}, a version of Raabe's test that is more appropriate to the given case. For the condition for convergence \eqref{d1} we are to use Bertrand's test \cite[p. 40]{BM} rather than Raabe's test or Frink's test. This is because the presence of the remainder makes using the last two tests insufficient. For the readers convenience, the aforementioned tests of Raabe, Frink and Bertrand are presented in Appendix B.

The similar more extended sufficient conditions for recurrence or transience of birth-and-death processes can be found in \cite{A1}.

\subsection*{Statements and declarations}
\textit{Disclosure of interest.} No conflict of interests was reported by the author.

\textit{Declaration of funding.} No funding for this research was received.

\textit{Data availability statement.} Data sharing is not applicable to this article as no new data were created or analyzed in this study.

\appendix

\section{Recurrence of Markov chains with infinitely countable set of states}

Let $\mathscr{P}$ be irreducible Markov chain with the infinite set of states $\{0, 1,\ldots\}$, and let $p_{i,j}$ denote transition probabilities from state $i$ to state $j$.
\begin{defn}
The states of Markov chain $\mathscr{P}$ are said to form connected domain if for any $i\geq1$ there are $j_1(i)$ and $j_2(i)$, $0\leq j_1(i)<i<j_2(i)\leq\infty$ such that $p_{i,k}>0$ for all $j_1(i)\leq k\leq i-1$ and $i+1\leq k\leq j_2(i)$, and $\sum_{k=j_1(i)}^{j_2(i)}p_{i,k}=1.$
\end{defn}

\begin{thm}
Assume that the states of the Markov chain $\mathscr{P}$ form connected domain, and $\sum_{j=1}^\infty jp_{0,j}<\infty$. Denote $e^-=\sum_{j=1}^{\infty}jp_{i+j-1,i-1}$ and $e_i^+=\sum_{j=1}^{i+1}jp_{i-j+1,i+1}$, $i\geq1$, assuming that $e_i^-<\infty$. Assume also that $1-p_{i,i}>\epsilon$ for all $i\geq0$, where $\epsilon$ is some positive value. Then the Markov chain $\mathscr{P}$ is recurrent if and only if
\[
\sum_{n=1}^\infty\prod_{i=1}^{n}\frac{e_i^-}{e_i^+}=\infty.
\]
\end{thm}

\section{The tests of Raabe, Frink and Bertrand}

Let \[\sum_{n=1}^{\infty}a_n\]
be a series with positive terms.

\subsection*{Raabe's test} Let
\[
\rho_n\equiv\left(\frac{a_n}{a_{n+1}}-1\right).
\]

The series converges, if $\liminf_{n\to\infty}\rho_n>1$.

The series diverges, if $\limsup_{n\to\infty}\rho_n<1$.

Otherwise, the test is inconclusive.

\subsection*{Frink's test} The test is an alternative version of Raabe's test (see \cite{S}).

The series converges, if $\limsup_{n\to\infty}\big(\frac{a_{n+1}}{a_n}\big)^n<\frac{1}{\mathrm{e}}$.

The series diverges, if there is $N$ such that $\big(\frac{a_{n+1}}{a_n}\big)^n\geq\frac{1}{\mathrm{e}}$ for all $n\geq N$.

Otherwise, the test is inconclusive.

\subsection*{Bertrand's test} Let
\[
\rho_n\equiv n\ln n\left(\frac{a_n}{a_{n+1}}-1\right)-\ln n.
\]

The series converges, if $\liminf_{n\to\infty}\rho_n>1$.

The series diverges, if $\limsup_{n\to\infty}\rho_n<1$.

Otherwise, the test is inconclusive.

\end{document}